\newtheorem{theorem}{Theorem}[section]
\newtheorem{lemma}[theorem]{Lemma}
\theoremstyle{definition}
\newtheorem{externaltheorem}{Theorem}
\newtheorem{corollary}[theorem]{Corollary}
\newtheorem{proposition}[theorem]{Proposition}
\theoremstyle{remark}
\numberwithin{equation}{section}
\begin{document}

\title[Avoiding infinite arithmetic progressions]{Large subsets of Euclidean space avoiding infinite arithmetic progressions}

\author{Laurestine Bradford}
\address{Department of Linguistics, McGill University, Montreal, Quebec, H3A 1A7, Canada and Centre for Research on Brain, Language and Music, Montreal, Quebec, H3G 2A8, Canada}
\email{laurestine.bradford@mail.mcgill.ca}
\thanks{The first author is supported by a CRBLM Graduate Student Stipend. The CRBLM is funded by the Government of Quebec via the Fonds de Recherche Nature et Technologies and Société et Culture}

\author{Hannah Kohut}
\address{Department of Mathematics, University of British Columbia, Vancouver, British Columbia, V6T 1Z2, Canada}
\email{kohut@math.ubc.ca}
\thanks{The second author is supported by NSERC Discovery Grants 22R81123 and 22R00756} 

\author{Yuveshen Mooroogen}
\address{Department of Mathematics, University of British Columbia, Vancouver, British Columbia, V6T 1Z2, Canada}
\email{yuveshenm@math.ubc.ca}
\thanks{The third author is supported by NSERC Discovery Grant GR010263}

\subjclass[2020]{Primary 28A75; Secondary 11B25}

\date{April 19, 2023}

\begin{abstract}
It is known that if a subset of $\mathbb{R}$ has positive Lebesgue measure, then it contains arbitrarily long finite arithmetic progressions. We prove that this result does not extend to infinite arithmetic progressions in the following sense: for each $\lambda$ in $[0,1)$, we construct a subset of $\mathbb{R}$ that intersects every interval of unit length in a set of measure at least $\lambda$, but that does not contain any infinite arithmetic progression.
\end{abstract}

\maketitle

\section{Introduction}

A \textit{finite arithmetic progression of length} $k$ is a set of the form $\{x, x + \Delta, x + 2\Delta, ..., x+(k-1) \Delta\}$ for some $x$ in $\mathbb{R}^n$ and $\Delta \neq 0$ in $\mathbb{R}^n$. It follows from the Lebesgue density theorem that every Lebesgue measurable subset $S$ of $\mathbb{R}^n$ with positive Lebesgue measure must contain finite arithmetic progressions of length $k$ for every $k$ in $\mathbb{N}$.

However, the same is not true of \textit{infinite} arithmetic progressions. We say that a subset $S$ of $\mathbb{R}^n$ contains an \textit{infinite arithmetic progression} or simply an \textit{arithmetic progression} if there exists a point $x$ in $S$ and a vector $\Delta \neq 0$ in $\mathbb{R}^n$ such that the set $x + \Delta \mathbb{N} = \{x + n\Delta : n \in \mathbb{N}\}$ is contained in $S$. We call $\Delta$ the \textit{gap length} or \textit{gap} of the progression. In this paper, we focus on arithmetic progressions with positive gap length, although symmetrical arguments apply to progressions with negative gap length. It is plain that a set of positive Lebesgue measure need not contain an infinite arithmetic progression; indeed, no bounded set contains any infinite arithmetic progression.

Sets of infinite measure may also fail to contain arithmetic progressions. For example, the set $S = \bigcup_{n \in \mathbb{N}} [n^2,n^2 + 1]$ has infinite measure but contains no infinite arithmetic progression. To see this, observe that while the set is made up of infinitely many intervals, the spaces between consecutive intervals grow without bound. On the other hand, the gap length in any given arithmetic progression is fixed, so no infinite arithmetic progression can lie completely within $S$. 

It is natural to ask whether a disjoint union of intervals must contain arithmetic progressions if it is not allowed to have arbitrarily large spaces between intervals. In this paper, we investigate the following question, proposed by Joe Repka to the third author \cite{Joe}: for each $\lambda \in [0,1)$, does there exist a subset $S$ of $\mathbb{R}$ such that $S$ intersects every interval of unit length in a set of measure at least $\lambda$, but $S$ does not contain any arithmetic progression?

We prove the following constructive result, which leads to an affirmative answer to Repka's question.

\begin{theorem}\label{th:mainresult}
For every $N \geq 1$ in $\mathbb{N}$, there exists a subset $S = S(N)$ of $\mathbb{R}$ such that 
\begin{align*}
    \vert S \cap [m,m+1] \vert = 1 - \frac{1}{N}
\end{align*}
for every $m$ in $\mathbb{Z}$, but that does not contain any arithmetic progression $x + \Delta\mathbb{N}$ for any $x$ in $S$ and $\Delta > 0$ in $\mathbb{R}$.
\end{theorem}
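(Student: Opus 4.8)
The plan is to delete from each unit interval $[m,m+1]$ a half-open subinterval of length $1/N$ whose position sweeps slowly and cyclically through the $N$ equal ``cells'' $[\,m+\tfrac jN,\,m+\tfrac{j+1}N)$, $j=0,\dots,N-1$, with the sweep getting slower as $m\to+\infty$. Concretely, fix integers $T_k\ge1$ ($k\ge0$) with $T_k\to\infty$ (e.g.\ $T_k=k+1$), set $L_0=0$ and $L_{k+1}=L_k+NT_k$, and define $c_m\in\{0,\dots,N-1\}$ by $c_m=0$ for $m<0$ and $c_m=\lfloor(m-L_k)/T_k\rfloor$ for $L_k\le m<L_{k+1}$; thus on the $k$-th block $[L_k,L_{k+1})$ the colour is $0$ on a run of $T_k$ consecutive integers, then $1$ on the next $T_k$, and so on up to $N-1$. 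Put
\[
S=\mathbb{R}\setminus\bigcup_{m\in\mathbb{Z}}\Big[\,m+\tfrac{c_m}{N},\ m+\tfrac{c_m+1}{N}\Big).
\]
Then $|S\cap[m,m+1]|=1-\tfrac1N$ for every $m$, and for every $y\in\mathbb{R}$ one has $y\notin S$ exactly when $c_{\lfloor y\rfloor}=\lfloor N\{y\}\rfloor$, where $\{y\}=y-\lfloor y\rfloor$ (half-open cells are used precisely so that this holds with no endpoint ambiguity). So it suffices to prove: for every $x\in\mathbb{R}$ and $\Delta>0$ there is $n\in\mathbb{N}$ with $c_{\lfloor x+n\Delta\rfloor}=\lfloor N\{x+n\Delta\}\rfloor$.

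Write $y_n=x+n\Delta$. The one nontrivial ingredient I would use is a uniform recurrence statement: there are $j^*\in\{0,\dots,N-1\}$ and an integer $g\ge1$ so that every set of $g$ consecutive integers contains some $n$ with $\lfloor N\{y_n\}\rfloor=j^*$. If $\Delta$ is rational, say $\Delta=a/b$ with $a,b$ positive integers, take $j^*=\lfloor N\{x\}\rfloor$ and $g=b$, since $\{y_n\}=\{x\}$ whenever $b\mid n$. If $\Delta$ is irrational, take $j^*=0$: the rotation $t\mapsto t+\Delta$ on $\mathbb{R}/\mathbb{Z}$ is minimal, so the open sets $\{t:\,t+n\Delta\bmod 1\in(0,\tfrac1N)\}$, $n\ge0$, cover the circle; a finite subcover gives $K$ for which already the sets with $n=0,\dots,K$ cover $\mathbb{R}/\mathbb{Z}$, so every set of $K+1$ consecutive integers contains an $n$ with $\{y_n\}\in(0,\tfrac1N)$, hence with $\lfloor N\{y_n\}\rfloor=0$; take $g=K+1$.

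Granting this, fix $x$ and $\Delta$, obtain $j^*$ and $g$, and pick $k$ so large that $L_k>x$ and $T_k\ge(g+2)\Delta$. Since $\Delta$ is fixed while $T_k\to\infty$, for such $k$ the progression enters the block $[L_k,L_{k+1})$ near its left end and steps across it by increments $\Delta<T_k$; in particular the sub-interval $R=[\,L_k+j^*T_k,\ L_k+(j^*+1)T_k)\subseteq[L_k,L_{k+1})$, on which every unit interval carries colour $j^*$, satisfies: $\{n\in\mathbb{N}:y_n\in R\}$ is an interval of at least $T_k/\Delta-1\ge g+1$ consecutive integers. By the recurrence statement one such $n^*$ has $\lfloor N\{y_{n^*}\}\rfloor=j^*=c_{\lfloor y_{n^*}\rfloor}$, so $y_{n^*}\notin S$, contradicting $x+\Delta\mathbb{N}\subseteq S$. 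This proves the theorem.

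I expect the hard parts to be essentially bookkeeping: the measure identity and the clean description of $\mathbb{R}\setminus S$ (watching endpoints), and the elementary counting that genuinely forces more than $g$ consecutive progression terms into the length-$T_k$ window $R$ — this is what makes $T_k\to\infty$ necessary and shows why no periodic hole pattern can work. The only substantive input is uniform recurrence for an irrational rotation; apart from that, the whole point is that a hole pattern that sweeps through every cell, arbitrarily slowly, must eventually line up with a cell that the progression visits with bounded gaps.
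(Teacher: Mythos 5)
Your proposal is correct, and it takes a genuinely different route from the paper's. Both constructions delete from each unit interval $[m,m+1]$ a half-open cell of length $1/N$ whose position cycles through the $N$ slots with blocks of growing length, but the paper fixes the block lengths to be $(N+1)^j$ with a single colour per block and exploits this specific geometry: for irrational $\Delta$ it invokes Weyl equidistribution, and the key numerics are that the last colour-$0$ block $[\beta_{k-1},\beta_k)$ occupies a fraction $N/(N+1)$ of $[0,\beta_k)$, so roughly that proportion of the first $M+1$ progression terms avoid $Q_0$, contradicting the limiting frequency $1/N$ once $\epsilon<1/(N^2+N)$. Your argument replaces quantitative equidistribution by the weaker and more elementary fact that an irrational rotation is minimal, from which a compactness (finite subcover) argument yields uniform recurrence into $(0,1/N)$ with some bounded gap $g$; then any block length $T_k\ge(g+2)\Delta$ traps at least $g$ consecutive progression terms in a single colour-$j^*$ run, one of which must hit the deleted cell. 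This also lets you treat the rational and irrational cases with one ``uniform recurrence'' lemma (taking $j^*=\lfloor N\langle x\rangle\rfloor$, $g=b$ in the rational case), whereas the paper first reduces rational gaps to integer gaps and then runs a separate pigeonhole argument. Trade-offs: the paper's choice of $\beta_k$ gives explicit numerical estimates and a very concrete $S$, while your construction is softer, needing only $T_k\to\infty$ and density of the orbit rather than equidistribution, and is shorter on the irrational side. Two small points to spell out in a full write-up: the count of consecutive $n$ with $y_n\in R$ is $>T_k/\Delta-1\ge g+1$, so at least $g$ consecutive indices are available for the recurrence lemma; and one should note that $\langle y\rangle\in(0,1/N)\subseteq[0,1/N)$ so the open-interval subcover indeed certifies $\lfloor N\langle y\rangle\rfloor=0$.
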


We have recently become aware of two preprints, \cite{KandP} and \cite{newkeleti}, that were inspired by our above result. In \cite{KandP}, Kolountzakis and Papageorgiou prove the following theorem, which extends Theorem \ref{th:mainresult} to a larger class of sequences. 

\begin{externaltheorem}[Kolountzakis--Papageorgiou, Theorem 1.1 from \cite{KandP}]\label{th:kandp}
    Let $\mathbb{A} = \{a_n : n \in \mathbb{N}\}$ be a sequence of real numbers such that (i) $a_0 = 0$, (ii) $a_{n+1} - a_n \geq 1$ for all $n$ in $\mathbb{N}$, and (iii) $\log{a_n} = o(n)$. For each $\lambda \in [0,1)$, there exists a subset $S$ of $\mathbb{R}$ such that $\vert S \cap [m,m+1] \vert \geq \lambda$ for all $m$ in $\mathbb{Z}$, but that does not contain any affine copy of $\mathbb{A}$.
\end{externaltheorem}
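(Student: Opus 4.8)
The plan is to construct the complement $G := \mathbb{R}\setminus S$ directly, as a countable union of ``catching patterns'' organized by scale, while keeping $|G\cap[m,m+1]|\le\delta:=1-\lambda$ for every $m\in\mathbb{Z}$. It suffices to arrange $|S\cap[m,m+1]|\ge\lambda$, and it suffices to defeat progressions $x+t\mathbb{A}$ with $t>0$, the case $t<0$ being symmetric. The key preliminary is a reformulation of hypothesis (iii): writing $A(x):=\#(\mathbb{A}\cap[0,x])$ for the counting function of $\mathbb{A}$, one checks directly that $\log a_n=o(n)$ is equivalent to $A(x)/\log x\to\infty$, whereas (ii) gives $A(x)\le x+1$; thus $\mathbb{A}$ is ``super-logarithmically dense.'' From this I would extract a \emph{density lemma}: for every affine copy $P=x+t\mathbb{A}$ and every $L\in\mathbb{N}$, there are arbitrarily large windows of prescribed multiplicative width $1+\theta$, each containing at least $L$ points of $P$. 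The proof telescopes $A$ along a chain of $K\asymp x_0/\bigl(\omega(x_0)\log(1+\theta)\bigr)$ consecutive multiplicative windows based at a large $x_0$, where $\omega(x):=\inf_{y\ge x}A(y)/\log y\to\infty$: the chain collectively catches $\gtrsim x_0$ indices while $A(x_0)\le x_0+1$ costs only a bounded fraction, forcing some single window to contain $\gtrsim\omega(x_0)\log(1+\theta)$ terms of $\mathbb{A}$ --- a quantity tending to infinity with $x_0$.

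Next comes the scale decomposition. I would take $G=\bigsqcup_{j}\bigsqcup_{k\ge j}G_{j,k}$, where $G_{j,k}$ is a union of short intervals, one inside each unit cell $[m,m+1]$, the interval in cell $m$ sitting at an offset $b_{j,k}(m)\in[0,1)$ that \emph{rotates} through an increasingly fine net of $[0,1)$ as $m$ varies, at ``spatial scale $2^{k}$'' --- exactly the mechanism used in the construction behind Theorem \ref{th:mainresult} to defeat the resonant parameters for which the fractional parts of a progression fail to equidistribute. The piece $G_{j,k}$ is charged with those copies $P=x+t\mathbb{A}$ for which $t\in(2^{j-1},2^{j}]$ and whose consecutive points, at the ambient spatial scale, are $\asymp 2^{k}$ apart; both indices are needed, because for a general $\mathbb{A}$ the ``effective local gap'' of a fixed $P$ is $t$ times the size of the increments of $\mathbb{A}$ near the ambient scale and can drift upward through the values of $k$, while the value of $t$ itself governs which resonances can occur. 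Taking the interval lengths $\delta_{j,k}$ with $\sum_{j,k}\delta_{j,k}\le\delta$ keeps $|G\cap[m,m+1]|\le\delta$. Crucially, $G$ is fixed once and for all before any progression is examined, so the verification below proceeds pointwise in $(x,t)$ and needs no union bound over the uncountable family of pairs.

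To verify that $G$ meets an arbitrary $P=x+t\mathbb{A}$, choose $j$ with $t\in(2^{j-1},2^{j}]$ and apply the density lemma to place, at an arbitrarily large spatial scale $W$, a long block of points of $P$; classifying $W$ dyadically selects a $k$, and one shows that $G_{j,k}$ has enough of those points in the cells it controls, and its rotating intervals sample finely enough, that some point of $P$ lands in a catching interval regardless of the arithmetic of $t$. This last step is where I expect the real difficulty to lie, together with the construction and \emph{uniform} analysis of the $G_{j,k}$ that must validate it. An affine copy of a general $\mathbb{A}$ need not be an arithmetic progression: the increments of $\mathbb{A}$, and hence the gaps of $P$, may oscillate within a single scale (for example when short increments alternate with slowly growing ones), so one cannot simply invoke a near-arithmetic block of $P$. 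One must therefore either squeeze a genuinely near-arithmetic sub-block (or a workable surrogate, such as a near-arithmetic progression of tight clusters) out of the quantitative form of $\log a_n=o(n)$ --- the point at which that hypothesis is really spent --- or design each $G_{j,k}$ robust enough to catch any configuration whose structure is controlled at scale $2^k$. Compounding this, for a single copy $P$ the effective gap scale $k$ marches off to infinity, so no one pattern catches all of $P$; one must confirm that \emph{some} $k$ succeeds at an arbitrarily large scale. Reconciling the several competing budgets --- the interval lengths $\delta_{j,k}$, the fineness of the rotations, and the thresholds past which each $G_{j,k}$ becomes effective --- so that they close simultaneously while still catching every affine copy is the heart of the proof.
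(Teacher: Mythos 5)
This statement is quoted from Kolountzakis--Papageorgiou and is not proved in the present paper; the cited proof is probabilistic, whereas you attempt a deterministic construction in the spirit of Theorem \ref{th:mainresult}. Your preliminary reduction is sound and matches how hypothesis (iii) is actually spent: $\log a_n = o(n)$ is equivalent to $A(x)/\log x \to \infty$ for the counting function $A$, and a pigeonhole over the $\approx \log X/\log(1+\theta)$ multiplicative windows partitioning $[X_0,X]$ then forces some window of width $1+\theta$, at arbitrarily large position, to contain $\gtrsim \omega(X)\log(1+\theta) \to \infty$ points of any affine copy. (Your telescoping count ``$K \asymp x_0/(\omega(x_0)\log(1+\theta))$ windows catching $\gtrsim x_0$ indices'' is garbled, but the conclusion is correct and provable by the straightforward pigeonhole just described.)

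The rest, however, is a plan rather than a proof, and the missing piece is precisely the theorem's content. You never define the catching sets $G_{j,k}$, and you never verify that a fixed pattern of measure $\delta_{j,k}$ per unit cell meets \emph{every} affine copy $x+t\mathbb{A}$ with parameters in the assigned range. You correctly diagnose why this is hard --- a long block of $P$ inside a multiplicative window need not be close to an arithmetic progression, so the rotation/equidistribution mechanism that works for $\Delta\mathbb{N}$ in Theorem \ref{th:mainresult} has no evident analogue --- but you then leave both proposed escape routes (extracting a near-arithmetic sub-block from $\log a_n = o(n)$, or designing ``robust'' catchers) entirely unexecuted. This is where the probabilistic argument earns its keep: one removes from each unit cell a \emph{random} interval of length $\delta$, discretizes the parameter space of pairs $(x,t)$ at a scale matched to $\delta$, and observes that the probability that all $L$ points of $P$ in a given window miss the random intervals decays like $(1-c\delta)^{L}$; since $L \gg \log(\text{ambient scale})$ by the density lemma while the discretization has only polynomially many cells per scale, a union bound closes. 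Your construction explicitly forgoes the union bound but supplies no deterministic substitute, so the argument as written does not establish the theorem. If you want a deterministic proof, the place to start is a lemma extracting, from the $L$ points in a multiplicative window, a sub-collection with controlled gap structure that a single rotating pattern provably intersects; absent that, the gap is genuine.
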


In \cite{newkeleti}, Burgin, Goldberg, Keleti, MacMahon, and Wang show that the condition on the size of the sets $S$ in both Theorem \ref{th:mainresult} and Theorem \ref{th:kandp} can be slightly improved.

\begin{externaltheorem}[Burgin--Goldberg--Keleti--MacMahon--Wang, Theorem 2.4 from \cite{newkeleti}]\label{th:burgin}
    Let $\mathbb{A} = \{a_n : n \in \mathbb{N}\}$ be a sequence of positive real numbers such that (i) $a_{n+1} - a_n \geq 1$ for all $n$ in $\mathbb{N}$, and (ii) $\log{a_n} = o(n)$. There exists a subset $S$ of $\mathbb{R}$ such that $\lim_{m \to \infty} \vert S \cap [m,m+1] \vert = 1$, but that does not contain any affine copy of $\mathbb{A}$.
\end{externaltheorem}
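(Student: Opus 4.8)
The plan is to take $S=\mathbb R\setminus E$, where $E$ is built as a pairwise–disjoint union of ``sparse combs'' supported in intervals $I_j$ with $\min I_j\to\infty$ and with the relative measure of $E$ in $I_j$ tending to $0$; this makes $|S\cap[m,m+1]|\to 1$ automatic, and the content is in arranging that $E$ meets every affine copy of $\mathbb A$. Several reductions come first. Since only the limit as $m\to\infty$ is prescribed, I put $(-\infty,0]\subseteq E$. An affine copy $\{x+ta_n:n\ge 0\}$ equals $\{(x+ta_0)+t(a_n-a_0)\}$, so I may replace $\mathbb A$ by $\{a_n-a_0:n\ge0\}$, which still satisfies (i) and, since $a_n-a_0\le a_n$, still satisfies (ii); thus I assume $a_0=0$. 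By symmetry I assume $t>0$, and since a copy with a non–positive term already meets $E$, I assume all terms are positive, i.e.\ $x>0$.

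The hypothesis is used only through a counting estimate. With $N(Y):=\#\{n:a_n\le Y\}$, condition (i) gives $N(Y)\le Y+1$, while (ii) gives, for every $c>0$, $a_n\le e^{n/c}$ for all large $n$, hence $N(Y)\ge c\log Y-O_c(1)$; in particular $N(Y)/\log Y\to\infty$. Therefore, if $J=[\ell,e^{C\ell}]$ with $C$ a large constant and $\ell$ large, then $\#(\mathbb A\cap J)=N(e^{C\ell})-N(\ell)\ge cC\ell-\ell-O(1)$, which for $c$ chosen large depending on $C$ is $\gg\ell\to\infty$; for a fixed positive copy $\{x+ta_n\}$ and such a $J$ lying far to the right of $x$, pulling back by $y\mapsto(y-x)/t$ shows the copy has at least that many terms inside $J$.

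For the construction, fix $\eta_j\downarrow0$. Put $I_j=[\ell_j,\ell_{j+1}]$ with $\ell_{j+1}=e^{\ell_j}$, so the $I_j$ are disjoint, tile $[\ell_1,\infty)$, and each is exponentially long relative to its left endpoint. In $I_j$ let $E_j$ be a finite union of arithmetic combs --- sets of the form $\{z:(z-\varphi)/\rho\in\mathbb Z+[0,\varepsilon)\}$ --- each of small relative width $\varepsilon=\varepsilon_j$, with total relative measure in $I_j$ at most $\eta_j$, the periods $\rho$ chosen near parameters $\rho_j$ that run (together with suitable phases $\varphi_j$) through a dense subset of parameter space, each pattern recurring for arbitrarily large $j$. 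The pre–image of such a comb under an affine map $y\mapsto x+ty$ is again a comb, of period $\rho/t$ and the same relative width, so as $j$ grows these pre–images realise an arbitrarily fine net of combs on the ``$a$–axis''. Set $E=(-\infty,0]\cup\bigcup_j E_j$; disjointness of the $I_j$ and $|E\cap I_j|\le\eta_j|I_j|$ give $|S\cap[m,m+1]|=1-o(1)$ as $m\to\infty$.

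It remains to show $E$ catches every positive copy $C=\{x+ta_n\}$; this is the crux and the main obstacle. Choose $j$ large so that the comb pattern in $I_j$ is tuned, via the density above, to $(x,t)$. By the counting estimate $C$ has $M_j\to\infty$ terms in $I_j$, i.e.\ $\mathbb A$ has $M_j$ elements in the pulled–back window $W_j$. Now split on the local gap behaviour of $\mathbb A$ in $W_j$: if $\mathbb A$ has a bounded–gap run there of length $>1/\varepsilon_j$, the corresponding terms of $C$ form a long progression with step $\approx t$ that sweeps past every tooth of any comb whose period lies between $t/\varepsilon_j$ and $t$ times the run length, so $C$ meets $E_j$; otherwise the gaps of $\mathbb A$ in $W_j$ must grow, which (ii) forces to happen slowly, and a pigeonhole/equidistribution argument on the residues $\{ta_n\bmod\rho\}$ for an appropriately chosen period $\rho$ again places some term of $C$ inside a tooth. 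The delicate point --- and where the hypothesis is genuinely needed rather than a convenience --- is exactly this last step: for a \emph{weaker} growth rate one cannot make a single comb of vanishing density unavoidable (for instance if $\mathbb A=\{2^n\}$ this approach would require presently unknown equidistribution facts about $\{2^n\alpha\bmod1\}$), so the quantitative strength of $N(Y)/\log Y\to\infty$ must be used to bound how tightly the $M_j$ terms can cluster modulo $\rho$; in tandem, the schedule $(\eta_j,\varepsilon_j,I_j,\rho_j,\varphi_j)$ must be fixed before seeing $(x,t)$ yet be rich enough that \emph{some} stage $j$ succeeds for \emph{every} $(x,t)$. Both the choice of schedule and the residue estimate require care, and they are where the bulk of the argument lies.
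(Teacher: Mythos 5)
This statement is quoted in the paper as an external result (Theorem 2.4 of Burgin--Goldberg--Keleti--MacMahon--Wang); the paper gives no proof of it, so there is nothing internal to compare against. Judged on its own terms, your proposal has a genuine gap at exactly the point you flag as ``the crux.'' The reductions, the counting estimate $N(Y)/\log Y\to\infty$, and the conclusion that every positive affine copy has $\gtrsim_{x,t}\ell_j$ terms in the block $I_j=[\ell_j,e^{\ell_j}]$ are all fine, and the framework (disjoint blocks, relative measure $\eta_j\to 0$, so $|S\cap[m,m+1]|\to 1$ is automatic) is the right one. But the theorem's entire content is the claim that a \emph{fixed} set $E_j\subseteq I_j$ of relative measure $\eta_j$ meets every such copy, and that claim is asserted rather than proved.

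Concretely: (a) your comb family must be chosen before $(x,t)$ is seen, with a measure budget of $\eta_j|I_j|$ per block, i.e.\ only about $\eta_j/\varepsilon_j$ combs per block; you never verify that such a family can form a fine enough net of the two-dimensional, noncompact parameter space of $(x,t)$ \emph{at the quantitative tolerance needed at stage $j$}, which degrades as $\ell_j$ grows. ``Parameters running through a dense set, each recurring infinitely often'' does not resolve this, because for a given $(x,t)$ you need a sufficiently well-tuned comb at a stage where the copy simultaneously has many points in that block. (b) In your first case, a bounded-gap run is not an arithmetic progression; ``sweeps past every tooth'' requires teeth of width at least $t$ times the maximal gap and a period at most the run's length, and you must show some pre-chosen comb satisfies both, which returns you to (a). (c) In your second case, the ``pigeonhole/equidistribution argument on $\{ta_n\bmod\rho\}$'' is precisely the missing theorem: for a general sequence satisfying only (i) and (ii) there is no equidistribution statement available, and you give no mechanism converting $N(Y)/\log Y\to\infty$ into control of residues modulo a pre-chosen $\rho$. (Indeed, the published proof avoids residue distribution altogether; its Lemma 2.3 constructs, for each block, a small-measure set that unavoidably meets any sufficiently numerous point configuration with separation $\geq t$, by intersecting combs over a full range of dyadic scales rather than by tuning a comb to $(x,t)$.) As written, the proposal is a plausible program, not a proof.
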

The main step leading to this improved result is Lemma 2.3 in \cite{newkeleti}.
 
The techniques employed in the proof of Theorem \ref{th:kandp} differ from the ones we use to prove Theorem \ref{th:mainresult}. In particular, Kolountzakis and Papageorgiou give a probabilistic proof of their result, whereas we give an explicit elementary construction broadly based on equidistribution of sequences on $[0,1)$. Our approach has applications beyond the context of the present problem, which we intend to develop in subsequent work.

Theorem \ref{th:mainresult}, Theorem \ref{th:kandp}, and Theorem \ref{th:burgin} are examples of \textit{avoidance} results, where one shows that a set may be large (in some quantifiable sense) without necessarily containing any affine copy of a prescribed set. Avoidance problems have been extensively studied both in the discrete setting and in the continuum. We identify a few salient results below.

In the discrete setting, Behrend shows in \cite{Behrend} that for any $\epsilon > 0$ and all large enough positive integers $M$, there exists a subset $S$ of $\{0, 1, \ldots, M - 1\}$ with $\#(S) > M^{1 - \epsilon}$ that does not contain any three-term arithmetic progression. Wagstaff, in his paper \cite{Wag}, shows that for every two real numbers $a$ and $b$ with $0 \leq a \leq b \leq 1$, there exists an increasing sequence $S$ of the natural numbers with lower density 
\begin{align*}
    \text{\underbar{$\delta$}}(S) = \liminf_{n \to \infty} \frac{\#(S \cap [1,n])}{n} = a
\end{align*}
and upper density
\begin{align*}
    \bar{\delta}(S) = \limsup_{n \to \infty} \frac{\#(S \cap [1,n])}{n} = b,
\end{align*}
but that does not contain any infinite arithmetic progression. In particular, when $a = b = 1$, Wagstaff's result gives a set of density $1$ that does not contain infinite arithmetic progressions. Behrend and Wagstaff's results both provide a counterpoint to Szemerédi's theorem, which says that every subset $S \subseteq \mathbb{N}$ with positive upper density 
must contain finite arithmetic progressions of length $k$ for every $k$ in $\mathbb{N}$ \cite{Szemeredi}.

In the continuum, the Erd\H{o}s similarity problem asks: given a sequence $\{x_n\} \to 0$, does there exist a subset $S$ of the real line that has positive Lebesgue measure and that does not contain any affine copy of this sequence? This question has been answered in the affirmative for many slowly-decaying sequences, but remains open for many simple examples, such as $\{2^{-n}\}_{n = 0}^\infty$. See \cite{Svetic} for a detailed survey of the progress on this problem up to 2000.

Still in the continuum, but now in the fractal regime, there is a large body of work concerned with constructing sets of large Hausdorff or Fourier dimension that avoid affine copies of prescribed sets. For example, in \cite{Keleti},  Keleti constructs a compact subset of \(\mathbb{R}\) that has full Hausdorff dimension but that does not contain any 3-term arithmetic progression. See also the work of Denson, Pramanik, and Zahl \cite{DensonPramanikZahl}, Fraser and Pramanik \cite{FraserPramanik}, Maga \cite{Maga}, Máthé \cite{Mathe}, Shmerkin \cite{Pablo}, and Yavicoli \cite{Alexia}.

Throughout this paper, we write $\mathbb{N}$ for the set of natural numbers including zero. For any real number $x$, we denote by $\lceil x \rceil$ the smallest integer that is no less than $x$, and by $\lfloor x \rfloor$ the largest integer that is no greater than $x$. We also define the \textit{fractional part} of $x$ to be the real number $\langle x \rangle$ such that $\langle x \rangle = x - \lfloor x \rfloor$. For any subset $A$ of $\mathbb{R}$, we denote by $\#A$ the cardinality of $A$. If $A$ is measurable, we write $\vert A \vert$ for its Lebesgue measure.

\section{Proof of Theorem 1.1.}\label{sec:proof}

\subsection{Construction of the set $S$.}

In what follows, we define $S$ as the disjoint union of a family of sets $\{S_m\}$ such that $S_m \subset [m,m+1)$ for every $m$ in $\mathbb{N}$.

Fix an $N \geq 1$ in $\mathbb{N}$, and divide the interval $[0,1)$ into $N$ disjoint half-open subintervals $Q_0, \ldots, Q_{N-1}$, each of measure $1/N$. More precisely, each $Q_i$ is the interval $[{i}/{N},({i+1})/{N})$. Then, for each $i$ in $\{0, \ldots, N -1\}$, let $R_i = [0,1) \setminus Q_i$. Each $R_i$ has measure $1 - 1/N$. See Figure \ref{fig1} for the special case $N = 3$.
\vspace{-1em}
\begin{figure}[ht]
\centering
\includegraphics[width=\textwidth]{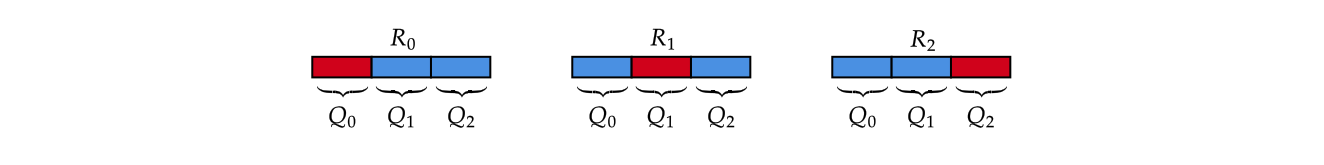}
\vspace{-2em}
\caption{The sets $R_i$ when $N=3$. The deleted intervals $Q_i$ are shaded in red, and the remaining intervals are shaded in blue.}
\label{fig1}
\end{figure}

We construct $S$ out of integer translates of the sets $R_i$. To do so, we introduce the sequence of integers $\{\beta_k\}_{k = 0}^\infty$ where $\beta_k$ is given by the geometric sum
\begin{align*}
    \beta_k = \sum_{j = 0}^{k - 1} (N + 1)^j.
\end{align*}
We also require the translation maps $\tau_m : \mathbb{R} \to \mathbb{R}$, defined by $\tau_m(x) = x + m$.

For each $m \in \mathbb{N}$, define $S_m$ as follows. Let $S_0 = R_0$. For $m > 0$, fix $k$ such that $\beta_{k} \leq m < \beta_{k+1}$ and let $S_m = \tau_m(R_i)$, where $i \equiv k\ (\operatorname{mod} N)$.

For each $m \in \mathbb{Z}$ with $m < 0$, set $S_m = \tau_{2m}(S_{|m|})$. Finally, let
\begin{align*}
    S = \bigcup_{m \in \mathbb{Z}} S_m.
\end{align*}

Observe that $S \cap [m,m+1) = S_m$ for every $m$ in $\mathbb{Z}$. Since translations preserve Lebesgue measure, we therefore have that
\begin{align*}
    \vert S \cap [m,m+1]\vert = 1 - \frac{1}{N},
\end{align*}
for every $m$ in $\mathbb{Z}$, as required by Theorem \ref{th:mainresult}.

\begin{figure}
\centering
\includegraphics[width=\textwidth]{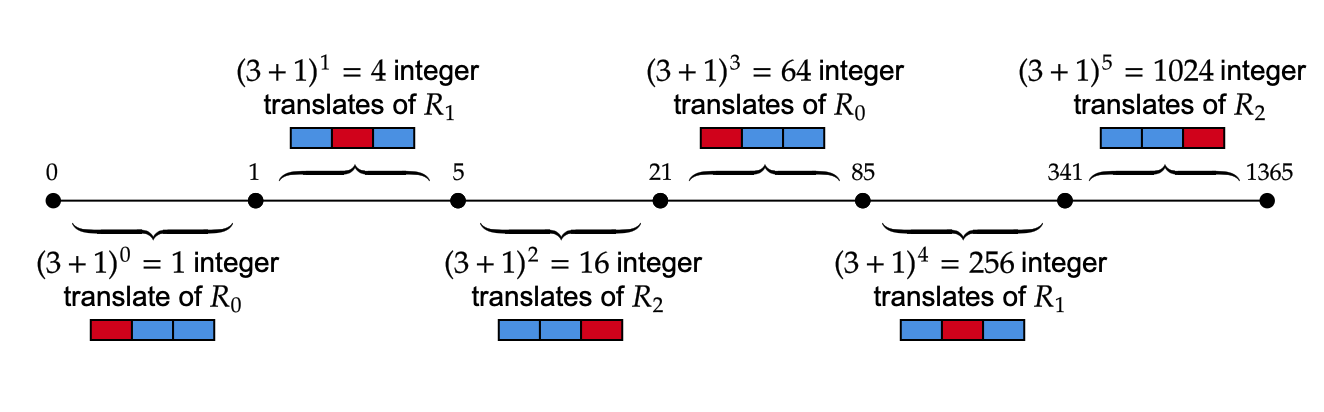}
\vspace{-2em}
\caption{Part of the set $S$ when $N = 3$.}
\label{fig:setS}
\end{figure}

In Figure \ref{fig:setS}, we illustrate part of the set $S$ when $N=3$. We see that the first $(N+1)^0$ intervals $S_m$ are translates of $R_0$, the next $(N+1)^1$ are translates of $R_1$, the next $(N+1)^2$ are translates of $R_2$, and so on. After we reach $(N+1)^{N-1}$ translates of $R_{N-1}$, the next $(N+1)^{N}$ intervals are translates of $R_0$, and we continue cycling through the sets $R_i$ with each successive block of $(N+1)^j$ intervals.

\subsection{Avoiding arithmetic progressions with rational gap length.} 

We show that $S$ does not contain any arithmetic progression whose gap length is a rational number. To do so, it suffices to check that $S$ avoids arithmetic progressions with integer gap length. Indeed, suppose that $S$ contains the arithmetic progression $x + \Delta\mathbb{N}$, where $\Delta = p / q$ for some integers $p$ and $q$. Then $S$ must contain the subset $x + \Delta(q\mathbb{N})$ of $x + \Delta\mathbb{N}$. This subset can be written as
\begin{align*}
    x + \Delta(q\mathbb{N}) = x + (\Delta q)\mathbb{N} = x + p\mathbb{N},
\end{align*}
which we recognise as an arithmetic progression with gap length $p$ in $\mathbb{Z}$.

\begin{lemma}
The set $S$ does not contain any arithmetic progression with integer gap length.
\end{lemma}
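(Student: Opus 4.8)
The plan is to suppose, for contradiction, that $S$ contains an arithmetic progression $x + \Delta\mathbb{N}$ with $\Delta$ a positive integer, and to derive a contradiction by looking at how the progression meets the blocks of translates of a single $R_i$. The key structural fact is that the points of the progression are $x, x+\Delta, x+2\Delta, \ldots$, and the integer parts $\lfloor x + n\Delta\rfloor$ march through $\mathbb{Z}$ in steps of exactly $\Delta$; since $\langle x + n\Delta\rangle$ is the constant $\langle x \rangle$ (because $\Delta$ is an integer), the point $x + n\Delta$ lies in $S$ precisely when $\langle x\rangle \in R_{i(m)}$, where $m = \lfloor x\rfloor + n\Delta$ and $i(m)$ is the index of the translate of $R$ used to build $S_m$. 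So the progression lies in $S$ if and only if $\langle x\rangle$ avoids the forbidden interval $Q_{i(m)}$ for every $m$ in the arithmetic progression $\lfloor x\rfloor + \Delta\mathbb{N}$ of integers.

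The heart of the argument is then to show that as $m$ ranges over $\lfloor x\rfloor + \Delta\mathbb{N}$, the index $i(m) \in \{0, 1, \ldots, N-1\}$ takes \emph{every} value in $\{0, \ldots, N-1\}$; once this is established we are done, because then $\langle x\rangle$ would have to avoid $Q_0 \cup Q_1 \cup \cdots \cup Q_{N-1} = [0,1)$, which is impossible as $\langle x\rangle \in [0,1)$. To see that every index is hit, recall that the construction partitions the positive integers into consecutive blocks $B_k = [\beta_k, \beta_{k+1})$ of length $(N+1)^k$, and on the block $B_k$ the index is $i(m) \equiv k \pmod N$. Thus it suffices to show that the arithmetic progression $\lfloor x\rfloor + \Delta\mathbb{N}$ meets blocks $B_k$ for $k$ in every residue class modulo $N$. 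Since the block lengths $(N+1)^k$ grow without bound, eventually $(N+1)^k > \Delta$, so every block $B_k$ with $k$ large enough contains at least one term of the progression; in particular the progression meets $B_k$ for all sufficiently large $k$, hence for $k$ in all $N$ residue classes mod $N$. (For the negative-index part of $S$ one uses the reflection $S_m = \tau_{2m}(S_{|m|})$, but in fact we never need it: already the positive blocks force the contradiction.)

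I expect the main obstacle to be purely bookkeeping: being careful that ``$x + n\Delta \in S$'' really does reduce to ``$\langle x\rangle \notin Q_{i(m)}$ with $m = \lfloor x + n\Delta\rfloor$,'' including the boundary behaviour of the half-open intervals $[m, m+1)$ and $Q_i = [i/N, (i+1)/N)$, and that $S \cap [m,m+1) = S_m$ exactly. One should also handle the edge case $\langle x \rangle = 0$ (then $x$ is an integer, $x + n\Delta$ is an integer for all $n$, and $0 \in Q_0$, so the very first time the index $i(m) = 0$ occurs the progression leaves $S$) — though this is subsumed by the general argument since $0 \in Q_0$. Beyond that, the only quantitative input is the elementary estimate that $(N+1)^k \to \infty$, so that all sufficiently long blocks are longer than the fixed gap $\Delta$ and hence must contain a progression term; this is where the choice of the rapidly growing block lengths $\beta_k$ pays off, and it is the one place the specific construction is used.
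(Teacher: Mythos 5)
Your proposal is correct and is essentially the same argument as the paper's. Both proofs rest on the three observations that the fractional part $\langle x_n\rangle = \langle x\rangle$ is constant, that the block lengths $\beta_{k+1}-\beta_k = (N+1)^k$ eventually exceed $\Delta$ so that each sufficiently late block must contain a term of the progression, and that the blocks cycle through all residues mod $N$; the only cosmetic difference is that the paper first identifies the $j$ with $\langle x\rangle \in Q_j$ and targets a single block with $k \equiv j \pmod N$, while you show the progression meets blocks for every residue class and conclude $\langle x\rangle$ would have to avoid all of $[0,1)$. (One small point to make explicit when writing this up: you also need $\beta_k > x$, not just $(N+1)^k > \Delta$, to guarantee $B_k$ contains a term; since $\beta_k \to \infty$ this costs nothing.)
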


\begin{proof}
By way of contradiction, suppose that there is an $x$ in $S$ and a $\Delta > 0$ in $\mathbb{N}$ such that the progression $x + \Delta\mathbb{N}$ is contained in $S$. For the purpose of this argument, we write this progression as an increasing sequence $\{x_n\}_{n = 0}^\infty$. Observe that every term in $x + \Delta\mathbb{N}$ has fractional part in the same $Q_j$. Indeed, every term of this progression has fractional part $\langle x \rangle$. Let $k$ be an index large enough so that 
\begin{enumerate}[(i)]
    \item $\beta_k > x$,
    \item $\beta_{k+1} - \beta_k > \Delta$, and
    \item $k \equiv j \ (\operatorname{mod} N)$.
\end{enumerate}
Such a choice of $k$ is always possible since the sequences $\{\beta_k\}_{k = 0}^\infty$ and $\{\beta_{k+1} - \beta_k\}_{k = 0}^\infty$ are both increasing and unbounded. 

Choose an $n$ so that $x_n$ is the largest term of $x + \Delta\mathbb{N}$ with $x_n < \beta_k$. Condition (i) guarantees that such a term exists. From condition (ii), we know that the next term in the progression, $x_{n + 1} = x_n + \Delta$, must belong to the interval $[\beta_k, \beta_{k+1})$. However, condition (iii) says that $S \cap [\beta_k, \beta_{k+1})$ is a union of translates of $R_j$. Therefore, no element of $S$ belonging to $[\beta_k, \beta_{k+1})$ has a fractional part in $Q_j$, and no term of the arithmetic progression $x+\Delta\mathbb{N}$ can lie in this interval. This is a contradiction.
\end{proof}

\subsection{Avoiding arithmetic progressions with irrational gap length.} 

It remains to show that $S$ does not contain any arithmetic progression where the gap length is an irrational number.

\begin{lemma}
The set $S$ does not contain any arithmetic progression with irrational gap length.
\end{lemma}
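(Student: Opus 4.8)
The plan is to exploit the fact that an arithmetic progression $x + \Delta\mathbb N$ with irrational gap $\Delta$ has the property that, restricted to any sufficiently long block of consecutive terms, the fractional parts $\langle x + n\Delta\rangle$ visit every subinterval of $[0,1)$ of length $1/N$. In the construction, the ``forbidden'' fractional-part region $Q_i$ is constant on each block $[\beta_k,\beta_{k+1})$, and these blocks have length $(N+1)^k$, which grows without bound. So once a block is long enough, it must contain a progression term whose fractional part lies in that block's forbidden region $Q_i$, and such a point cannot belong to $S \cap [\beta_k,\beta_{k+1})$.

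The technical heart is a uniform recurrence statement for the orbit of the rotation by $\Delta$. By Weyl's equidistribution theorem the sequence $(\langle n\Delta\rangle)_{n\ge 0}$ is equidistributed modulo $1$; all we actually need is that $\{\langle n\Delta\rangle : n\in\mathbb N\}$ is dense in $[0,1)$. Fixing $\epsilon = 1/(3N)$, choose finitely many natural numbers $0 = n_0 < n_1 < \cdots < n_K$ whose fractional parts are $\epsilon$-dense in $\mathbb R/\mathbb Z$, and set $L_0 = n_K + 1$. For any real $y$ and any $m\in\mathbb N$, the set $\{\langle y + (m+n_j)\Delta\rangle : 0\le j\le K\}$ is a rotation of $\{\langle n_j\Delta\rangle : 0\le j\le K\}$, hence still $\epsilon$-dense, and since each $n_j \le n_K = L_0 - 1$ it is contained in $\{\langle y + (m+j)\Delta\rangle : 0\le j\le L_0 - 1\}$. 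An $\epsilon$-dense subset of $\mathbb R/\mathbb Z$ meets every interval of length exceeding $2\epsilon = 2/(3N)$, so in particular it meets the interior of each $Q_i$. Thus \emph{any} $L_0$ consecutive terms of $x + \Delta\mathbb N$ have fractional parts hitting every $Q_i$, with $L_0$ depending only on $\Delta$ and $N$.

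Now suppose for contradiction that $x + \Delta\mathbb N \subseteq S$ with $\Delta$ irrational. An interval of length $(N+1)^k$ contains at least $(N+1)^k/\Delta - 1$ terms of $x + \Delta\mathbb N$, and these are consecutive; since this quantity tends to infinity, we may fix $k$ large enough that $\beta_k > x$ and that $[\beta_k,\beta_{k+1})$ contains at least $L_0$ terms of the progression. Let $i$ be the residue of $k$ modulo $N$. By the previous paragraph some term $x + n^\ast\Delta$ lies in $[\beta_k,\beta_{k+1})$ with $\langle x + n^\ast\Delta\rangle \in Q_i$. But $S\cap[\beta_k,\beta_{k+1})$ is a union of translates of $R_i = [0,1)\setminus Q_i$, so every element of $S$ in this block has fractional part outside $Q_i$. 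This contradiction completes the proof.

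I expect the main obstacle to be isolating and proving the uniform recurrence lemma of the second paragraph: the subtlety is that we need the ``hitting time'' of a fixed target interval to be bounded independently of the starting point of the orbit, so that a single length $L_0$ works simultaneously for all blocks $[\beta_k,\beta_{k+1})$. Once this uniformity is in hand, the remainder is routine bookkeeping with the geometric growth of the $\beta_k$.
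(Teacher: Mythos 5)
Your proof is correct, and it takes a genuinely different route from the paper's. The paper argues \emph{quantitatively}: it applies the full strength of Weyl's theorem to conclude that the proportion of $\{x_0,\dots,x_M\}$ with fractional part in $Q_0$ tends to $1/N$, then carefully chooses $k\equiv 1\pmod N$ and $M$ (the index of the last term below $\beta_k$) and, via ceiling-function estimates, shows that the proportion of those terms trapped in $I=[\beta_k/(N+1),\beta_k)\subset[\beta_{k-1},\beta_k)$ is at least $N/(N+1)$; since none of those can have fractional part in $Q_0$, one gets a contradiction for $\epsilon<1/(N^2+N)$. That approach requires an auxiliary normalization ($x_0>((2N+1)/N)\Delta$) and the specific geometric identity $\beta_k/(N+1)=\beta_{k-1}+1/(N+1)$.

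Your argument is \emph{local and qualitative}: you prove a uniform recurrence statement for the irrational rotation --- a single $L_0=L_0(\Delta,N)$ such that any $L_0$ consecutive terms of the orbit have fractional parts hitting every $Q_i$ --- by observing that if $\{\langle n_j\Delta\rangle\}_{j\le K}$ is $\epsilon$-dense then so is every translate $\{\langle y+(m+n_j)\Delta\rangle\}_{j\le K}$, with $\epsilon=1/(3N)<1/(2N)$. Then you only need one block $[\beta_k,\beta_{k+1})$ long enough to contain $L_0$ consecutive terms, which holds eventually since $\beta_{k+1}-\beta_k=(N+1)^k\to\infty$. This is a strictly weaker input: you only use density of the orbit (Kronecker/Bohl, or the pigeonhole argument) rather than equidistribution, you do not need the extra hypothesis on $x_0$, and you do not use the geometric structure of the $\beta_k$ beyond $\beta_{k+1}-\beta_k\to\infty$. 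The paper's approach, by contrast, directly exhibits a violation of the equidistribution bound, which is closer in spirit to the discrepancy estimates one would need for a fully quantitative version of the theorem. Both are valid; yours is arguably the more elementary and more robust of the two, since it would survive a construction with slower-growing block lengths.
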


\begin{proof}
Suppose for contradiction that there exists an $x$ in $S$ and a $\Delta > 0$ in $\mathbb{R} \setminus \mathbb{Q}$ such that $S$ contains the arithmetic progression $x + \Delta\mathbb{N}$. As above, we write this progression as an increasing sequence $\{x_n\}_{n = 0}^\infty$. We also assume that $x_0 > ((2N+1)/N)\Delta$. (If this condition is not true, apply the argument below to the subsequence $\{x_n\}_{n=3}^\infty$. If $\{x_n\}_{n=3}^\infty$ is not contained in $S$, then the arithmetic progression $\{x_n\}_{n=0}^\infty$ cannot be in $S$ either.) This last condition will help simplify a later estimate; see \eqref{eq:assumption}.

By Weyl's polynomial equidistribution theorem \cite{Weyl}, the sequence of {fractional parts} $\{\langle x_n \rangle\}_{n = 0}^\infty$ is equidistributed in $[0,1)$. This means that for every subinterval $[a,b)$ of $[0,1)$,
\begin{align*}
    \lim_{M \to \infty} \frac{\#(\{\langle x_n \rangle\}_{n=0}^M \cap [a,b))}{M+1} = b - a.
\end{align*}
In particular, this must be true for the subinterval $Q_0$ of $[0,1)$. Fix an arbitrary $\epsilon > 0$. Then there exists an $L$ in $\mathbb{N}$ with the property that whenever $M > L$,
\begin{align}\label{eq:equidistribution}
    \left\vert \frac{\#(\{\langle x_n \rangle\}_{n=0}^M \cap Q_0)}{M+1} - \frac{1}{N}\right\vert < \epsilon.
\end{align}

To achieve a contradiction, fix $\epsilon < 1 / (N^2 + N)$. We exhibit an $M > L$ for which the above inequality fails.

Let $k$ be an index large enough so that 
\begin{enumerate}[(i)]
    \item $\beta_k > x_{L + 1}$,
    \item $k \equiv 1 \ (\operatorname{mod} N)$, and
    \item $(N/(N+1))\beta_k > \Delta$.
\end{enumerate}
Then, let \(M\) be the {largest} natural number such that \(x_M < \beta_k\). Notice that we have \(M \geq L+1 > L\) by condition (i), so \eqref{eq:equidistribution} holds with this choice of $M$.

We make the following observations, to be proved below.

\textit{Claim 1:} The fraction of the terms of $\{x_0, \ldots, x_M\}$ which lie in the interval 
    \begin{align*}
        I = \left[\frac{1}{N + 1}\beta_k, \beta_k\right)
    \end{align*}
is at least $N/(N + 1)$.

\textit{Claim 2:} If $x$ is a point in $S\cap I$, its fractional part is not in $Q_0$.

Together, these claims imply that $\#(\{\langle x_n \rangle\}_{n=0}^M \cap Q_0)$ is at most $(M+1)/(N+1)$. This contradicts \eqref{eq:equidistribution} whenever $\epsilon < 1/(N^2 + N)$.

To prove Claim 1, observe that for any positive real number $a$, the number of elements of the arithmetic progression $\{x_n\}_{n=0}^\infty$ that are contained in the interval \([0,a)\) is given by \(\lceil(a-x_0)/{\Delta}\rceil\). Therefore, the number of terms in the interval $[1/(N+1)\beta_k, \beta_k)$ is 
$$\left\lceil\frac{\beta_k-x_0}{\Delta}\right\rceil-\left\lceil\frac{(1/(N+1))\beta_k-x_0}{\Delta}\right\rceil.$$

Notice also that by choice of $M$ there are $M+1$ terms of $\{x_n\}_{n=0}^\infty$ in $[0, \beta_k)$. It will be convenient to write this as $\left\lceil{(\beta_k-x_0)}/{\Delta}\right\rceil$. Hence, we can express the fraction of the terms of \(\{x_0, \ldots, x_M\}\) that lie in the interval $I$ as
\begin{align}\label{eq:proportion}
    \frac{\left\lceil\frac{\beta_k-x_0}{\Delta}\right\rceil-\left\lceil\frac{(1/(N+1))\beta_k-x_0}{\Delta}\right\rceil}{\left\lceil\frac{\beta_k-x_0}{\Delta}\right\rceil}.
\end{align}
We show that, by our choice of $k$, the above quantity is at least $N/(N+1)$. Using the identity \(a \leq \lceil a\rceil < a+1\), we see that
\begin{align*}
    \left\lceil\frac{\beta_k-x_0}{\Delta}\right\rceil-\left\lceil\frac{\frac{1}{N+1}\beta_k-x_0}{\Delta}\right\rceil &> \frac{\frac{N}{N+1}\beta_k - \Delta}{\Delta}
\end{align*}
is a lower bound for the numerator of \eqref{eq:proportion} and that
\begin{align*}
    \left\lceil\frac{\beta_k-x_0}{\Delta}\right\rceil < \frac{\beta_k-x_0 + \Delta}{\Delta}
\end{align*}
is an upper bound for the denominator of \eqref{eq:proportion}. (Note that both of these bounds are positive due to conditions (iii) and (i) on $k$.) Together, these estimates imply that
\begin{align}\label{eq:assumption}
    \frac{\left\lceil\frac{\beta_k-x_0}{\Delta}\right\rceil-\left\lceil\frac{(1/(N+1))\beta_k-x_0}{\Delta}\right\rceil}{\left\lceil\frac{\beta_k-x_0}{\Delta}\right\rceil} &> \frac{N}{N+1}\left(\frac{\beta_k-\frac{N+1}{N}\Delta}{\beta_k - x_0 +\Delta}\right).
\end{align}
Using our assumption that $x_0 > ((2N+1)/N)\Delta$, we see that the fraction in parentheses above is greater than $1$. We conclude that
\begin{align*}
    \frac{\left\lceil\frac{\beta_k-x_0}{\Delta}\right\rceil-\left\lceil\frac{(\beta_k/(N+1))-x_0}{\Delta}\right\rceil}{\left\lceil\frac{\beta_k-x_0}{\Delta}\right\rceil} >\frac{N}{N+1}.
\end{align*}

Let us now prove Claim 2. Since the index $k$ satisfies $k \equiv 1 \ (\operatorname{mod} N)$, we must also have $k - 1 \equiv 0 \ (\operatorname{mod} N)$. This means that $S \cap [\beta_{k-1},\beta_k)$ is a union of integer translates of $R_0$. Therefore, no element of $S\cap [\beta_{k-1},\beta_k)$ has fractional part in $Q_0$. To prove the claim, it suffices to show that $\beta_k/(N+1) > \beta_{k - 1}$. This will imply that the interval $I$ is a subset of $[\beta_{k-1},\beta_k)$. This is a computation:
\begin{equation*}
    \frac{\beta_k}{N+1} = \sum_{j=0}^{k-1} (N+1)^{j-1} = \frac{1}{N+1} + \sum_{j=0}^{k-2} (N+1)^j = \frac{1}{N+1} + \beta_{k-1} >\beta_{k-1}.  \qedhere
\end{equation*}
\end{proof}

\section{Solution to Repka's problem and higher dimensions}

Let us now explain how Theorem \ref{th:mainresult} provides a positive answer to Repka's problem in the real line. We also show that it implies an analogous result in higher dimensions.

\begin{corollary}\label{cor:reduction}
For each $\lambda$ in $[0,1)$, there exists a subset $S = S(\lambda)$ of $\mathbb{R}$ that intersects every interval of unit length in a set of measure at least $\lambda$, but that does not contain any arithmetic progression.
\end{corollary}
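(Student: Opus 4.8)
The plan is to derive the corollary directly from Theorem \ref{th:mainresult} by choosing the parameter $N$ sufficiently large. Given $\lambda \in [0,1)$, I would first fix a natural number $N \geq 1$ satisfying $1 - 2/N \geq \lambda$; since $1 - \lambda > 0$, any $N$ with $N \geq \lceil 2/(1-\lambda)\rceil$ (and $N \geq 1$, which is then automatic) will do. Let $S = S(N)$ be the set furnished by Theorem \ref{th:mainresult}. That theorem already guarantees that $S$ contains no arithmetic progression, so the only remaining point is to upgrade the measure estimate, which Theorem \ref{th:mainresult} states for intervals with integer endpoints, to \emph{every} unit interval.

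To do this, I would take an arbitrary interval $J$ of unit length. Its measure and its intersection with $S$ are unchanged if we replace $J$ by its closure, so we may write $J = [t, t+1]$ for some real $t$, and then $J \subseteq [\lfloor t \rfloor, \lfloor t \rfloor + 2]$. By Theorem \ref{th:mainresult} we have $\vert S \cap [m, m+1] \vert = 1 - 1/N$ for every integer $m$, so the complement of $S$ inside $[\lfloor t \rfloor, \lfloor t \rfloor + 2]$ has Lebesgue measure $1/N + 1/N = 2/N$. In particular $\vert J \setminus S \vert \leq 2/N$, and therefore $\vert J \cap S \vert \geq \vert J \vert - 2/N = 1 - 2/N \geq \lambda$, as desired.

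I do not anticipate a genuine obstacle here; the single subtlety is precisely that Theorem \ref{th:mainresult} controls $\vert S \cap [m,m+1] \vert$ only for integer $m$, whereas a general unit interval straddles two consecutive integer intervals and can, in the worst case, miss the deleted portions $Q_i$ of both, costing a factor of $2$ in the deficiency $1 - \vert J \cap S \vert$. This loss is harmless because $N$, and hence $1 - 2/N$, may be taken arbitrarily close to $1$. (One could instead inspect which translates of the sets $R_i$ occupy the two relevant blocks in the construction of $S$ and recover the sharper bound $1 - 1/N$ for most interval positions, but such a refinement is not needed for the statement of the corollary.)
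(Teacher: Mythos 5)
Your proof is correct and is essentially the same as the paper's: both choose $N$ with $2/N \leq 1 - \lambda$, invoke Theorem \ref{th:mainresult}, cover an arbitrary unit interval by two consecutive integer-endpoint intervals, and bound the measure of the complement by $2/N$.
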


\begin{proof}
Fix $\lambda$ in $[0,1)$. Choose an integer $N$ large enough that $2/N \leq 1 - \lambda$, and apply Theorem \ref{th:mainresult} to obtain a set $S$ (depending on $N$, and hence on $\lambda$) which does not contain any arithmetic progressions, and with the property that
\begin{align}\label{eq:msrpropertyofthm1}
    \vert S \cap [m,m+1] \vert = 1 - \frac{1}{N}
\end{align}
for every $m$ in $\mathbb{Z}$. We claim that this $S$ intersects every interval of unit length in a set of measure at least $\lambda$. 

Let $I$ be an arbitrary interval of unit length in $\mathbb{R}$, and choose an integer $m$ so that $I$ is contained in the union $[m, m+1] \cup [m + 1, m+2]$. Then 
\begin{align*}
        \vert I \setminus S \vert &\leq \vert [m,m+2]\setminus S \vert \\
        &= \vert  [m,m+1] \setminus S \vert + \vert [m+1, m+2] \setminus S \vert \\
        &= \frac{1}{N} + \frac{1}{N} \\
        &\leq 1 - \lambda,
\end{align*}
where we have used \eqref{eq:msrpropertyofthm1} in the third line. This implies that $S \cap I$ has measure at least $\lambda$.
\end{proof}

After submitting this article, we were informed by an anonymous referee that it suffices to construct a set that does not contain arithmetic progressions with irrational gap length. With the referee's permission, we include their argument below. While this argument is shorter than the construction in Section \ref{sec:proof}, the latter has the advantage of producing a more explicit set, in the sense that intersections with any unit interval are easily illustrated.

\begin{proposition}
Suppose that for each $\mu$ in $[0,1)$, there exists a subset $S$ of $\mathbb{R}$ that intersects every interval of unit length in a set of measure at least $\mu$, but that does not contain any arithmetic progression with irrational gap length. Then, for each $\lambda$ in $[0,1)$, there exists a subset $T$ of $\mathbb{R}$ that intersects every interval of unit length in a set of measure at least $\lambda$, but that does not contain any arithmetic progression with any gap length.
\end{proposition}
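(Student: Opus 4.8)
The plan is to reduce the proposition to its hypothesis by a single dilation combined with an intersection. Fix $\lambda$ in $[0,1)$ and fix an irrational number $\alpha > 1$, for concreteness $\alpha = \sqrt{2}$. First I would choose $\mu$ in $[0,1)$ close enough to $1$ that $(1+\alpha)(1-\mu) \leq 1 - \lambda$; this is possible precisely because $1 - \lambda > 0$. Applying the hypothesis with this value of $\mu$ produces a set $S \subseteq \mathbb{R}$ that meets every interval of unit length in a set of measure at least $\mu$ and that contains no arithmetic progression with irrational gap length. I then set $T = S \cap \alpha S$, where $\alpha S = \{\alpha s : s \in S\}$, and claim that $T$ has both required properties.

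To see that $T$ contains no arithmetic progression of any gap length, suppose for contradiction that $x + \Delta\mathbb{N} \subseteq T$ for some $x$ in $T$ and some $\Delta > 0$ in $\mathbb{R}$. Since $x + \Delta\mathbb{N} \subseteq S$ and $S$ avoids progressions with irrational gap, $\Delta$ must be rational. On the other hand, $x + \Delta\mathbb{N} \subseteq \alpha S$ implies that $\alpha^{-1}x + (\alpha^{-1}\Delta)\mathbb{N} \subseteq S$, which is an arithmetic progression contained in $S$ with gap length $\alpha^{-1}\Delta$. But $\Delta$ is a nonzero rational and $\alpha$ is irrational, so $\alpha^{-1}\Delta$ is irrational, contradicting the fact that $S$ avoids progressions with irrational gap. (Equivalently, a nonzero $\Delta$ cannot simultaneously lie in $\mathbb{Q}$ and in $\alpha\mathbb{Q}$.) Hence no such progression exists in $T$.

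For the measure bound, let $I$ be an arbitrary interval of unit length. Writing $\alpha S \cap I = \alpha\bigl(S \cap \alpha^{-1}I\bigr)$ and using $\alpha > 1$, the interval $\alpha^{-1}I$ has length $\alpha^{-1} < 1$, hence is contained in some interval of unit length; therefore $\vert \alpha^{-1}I \setminus S \vert \leq 1 - \mu$, and scaling by $\alpha$ gives $\vert I \setminus \alpha S\vert = \alpha\,\vert \alpha^{-1}I \setminus S\vert \leq \alpha(1-\mu)$. Combining this with $\vert I \setminus S\vert \leq 1 - \mu$ yields $\vert I \setminus T\vert \leq \vert I \setminus S\vert + \vert I \setminus \alpha S\vert \leq (1+\alpha)(1-\mu) \leq 1 - \lambda$, so $\vert T \cap I\vert \geq \lambda$, as required.

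There is no serious obstacle in this argument; the only point that needs a little care is the measure estimate for $\alpha S$ on a unit interval, where taking the dilation factor $\alpha$ to be greater than $1$ (rather than less than $1$) ensures that $\alpha^{-1}I$ is shorter than a unit interval, so that the hypothesis can be applied on a single unit interval instead of a tiled longer one. The conceptual heart of the proof is simply that dilation by an irrational factor interchanges rational and irrational gap lengths, so that $S$ and $\alpha S$ have complementary ``forbidden gap'' sets and their intersection avoids progressions of every gap length at once.
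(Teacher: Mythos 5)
Your argument is correct and is essentially identical to the paper's: you choose an irrational $r > 1$, a $\mu$ satisfying the same algebraic condition (written in equivalent form), take $T = S \cap rS$, and establish the measure bound and the avoidance of progressions in the same way. The only difference is cosmetic.
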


\begin{proof}
    Given $\lambda$ in $[0,1)$, choose any irrational number $r > 1$ and choose $\mu$ with $(\lambda + r)/(1 + r) \leq \mu < 1$. By assumption, we know that there exists a set $S$ that intersects every interval of unit length in a set of measure at least $\mu$, but that does not contain any arithmetic progression with irrational gap length.

    Let $T = S \cap rS$. Fix an interval of unit length $I$, and choose an interval of unit length $J$ such that $r^{-1}I \subset J$. Then
    \begin{align*}
        \vert I \setminus T \vert &\leq \vert I \setminus S \vert + \vert I \setminus rS \vert \\ 
        &= \vert I \setminus S \vert + r \vert (r^{-1} I) \setminus S \vert \\
        &\leq \vert I \setminus S \vert + r \vert J \setminus S \vert \\
        &\leq 1 - \mu + r (1 - \mu) \\
        &\leq 1 - \lambda.
    \end{align*}
    This implies that $T \cap I$ has measure at least $\lambda$.
    
    Since $T \subseteq S$, $T$ does not contain any arithmetic progression with irrational gap length. Suppose for contradiction that there exist $x \in \mathbb{R}$ and $\Delta \in \mathbb{Q}$ such that the arithmetic progression $x + \Delta \mathbb{N}$ is contained in $T$. Then $x + \Delta \mathbb{N}$ must also belong to $rS$. This implies that the arithmetic progression $(x/r) + (\Delta/r)\mathbb{N}$ with irrational gap length $\Delta/r$ belongs to $S$, which contradicts our choice of $S$. Thus, $T$ contains no arithmetic progression.    
\end{proof}

The next corollary to Theorem \ref{th:mainresult} asserts that by taking $n$-fold Cartesian products of the sets $S$ constructed in the proof of Corollary \ref{cor:reduction}, we obtain large subsets of $n$-dimensional Euclidean space that avoid arithmetic progressions.

\begin{corollary}
For each $\lambda$ in $(0,1)$, and each $n \geq 1$, there exists a subset $S^n = S^n(\lambda, n)$ of $\mathbb{R}^n$ that intersects every cube (with sides parallel to the axes) of unit volume in a set of measure at least $\lambda$, but that does not contain any arithmetic progression.
\end{corollary}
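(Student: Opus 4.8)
The plan is to take $S^n = S \times S \times \cdots \times S$ ($n$ factors), where $S \subset \mathbb{R}$ is the set produced by Corollary \ref{cor:reduction} at a parameter $\mu \in [0,1)$ to be chosen shortly. The point is that an arithmetic progression $x + \Delta\mathbb{N}$ in $\mathbb{R}^n$, with $\Delta = (\Delta_1, \ldots, \Delta_n) \neq 0$, projects onto a coordinate $j$ with $\Delta_j \neq 0$ to give the progression $x_j + \Delta_j\mathbb{N}$ in $\mathbb{R}$; if the full progression lay in $S^n$, this projected progression would lie in $S$, contradicting the defining property of $S$. So the avoidance half of the statement is essentially immediate from the one-dimensional result.

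For the measure lower bound, fix a unit cube $C = I_1 \times \cdots \times I_n$ with each $I_j$ an interval of unit length. By Corollary \ref{cor:reduction}, $|I_j \cap S| \geq \mu$, equivalently $|I_j \setminus S| \leq 1 - \mu$. Now $C \setminus S^n \subseteq \bigcup_{j=1}^n \{y \in C : y_j \notin S\}$, and each of these $n$ sets has measure $|I_1| \cdots |I_{j-1}| \cdot |I_j \setminus S| \cdot |I_{j+1}| \cdots |I_n| \le 1 - \mu$. Hence $|C \setminus S^n| \le n(1-\mu)$, so $|C \cap S^n| \ge 1 - n(1-\mu)$. Given $\lambda \in (0,1)$, choose $\mu \in [0,1)$ close enough to $1$ that $n(1 - \mu) \le 1 - \lambda$, i.e. $\mu \ge 1 - (1-\lambda)/n$; then $|C \cap S^n| \ge \lambda$, as required. (Here $C$ has unit volume because each side has unit length; note the hypothesis $\lambda > 0$ is not even needed, but it matches the statement.)

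Assembling: given $\lambda$ and $n$, pick $\mu = 1 - (1-\lambda)/n$, invoke Corollary \ref{cor:reduction} to get $S = S(\mu)$, set $S^n = S^{\times n}$, and verify the two properties exactly as above. The only mild subtlety — really the one place to be careful — is the projection argument for avoidance: one must observe that a nonzero gap vector $\Delta$ has at least one nonzero coordinate, and that the coordinate projection of an arithmetic progression with that gap is again a genuine (infinite, positive-gap up to sign) arithmetic progression in $\mathbb{R}$, so that the one-dimensional avoidance — which was stated for progressions of either sign via the symmetric argument remarked on in the introduction — applies. Everything else is a routine union bound on measures.
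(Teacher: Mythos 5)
Your proof is correct and uses essentially the same approach as the paper: take the $n$-fold Cartesian product of the one-dimensional set from Corollary~\ref{cor:reduction}, and use coordinate projection to reduce avoidance to the one-dimensional case. The only point of departure is the measure estimate. The paper exploits the product structure multiplicatively, choosing the one-dimensional parameter $\mu = \lambda^{1/n}$ so that $|S^n \cap Q| = \prod_{i=1}^n |S \cap \pi_i(Q)| \ge \lambda$; you instead run a union bound on the complement, which leads to the choice $\mu = 1 - (1-\lambda)/n$. Both choices land $\mu$ in $[0,1)$ and both estimates are elementary, so this is a cosmetic rather than a structural difference. You also correctly flag the one mild subtlety of the projection step, namely that the projected gap may be negative, and rightly observe that this is covered by the symmetric argument for negative gaps mentioned in the paper's introduction.
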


\begin{proof}
Observe that if $A \subset \mathbb{R}^m$ and $B \subset \mathbb{R}^n$ do not contain arithmetic progressions, then their Cartesian product $A \times B \subset \mathbb{R}^{m+n}$ also does not contain arithmetic progressions. We prove the contrapositive. Suppose that there is an $x = (x_1, \ldots, x_m, x_{m+1}, \ldots, x_{m+n})$ in $A \times B$ and a nonzero $\Delta = (\delta_1, \ldots, \delta_{m+n})$ in $\mathbb{R}^{m+n}$ such that
\begin{align*}
    x + \Delta\mathbb{N} = \{(x_1 + k\delta_1, \ldots, x_m + k\delta_m, x_{m+1} + k\delta_{m+1}, \ldots, x_{m+n} + k\delta_{m+n}) : k \in \mathbb{N}\}
\end{align*}
is contained in $A \times B$. 

Consider the projection maps $\pi_A : A \times B \to A$ and $\pi_B : A \times B \to B$. Since $\Delta \neq 0$, we must have that either $\pi_A(\Delta) = (\delta_1, \ldots, \delta_m) \neq 0$ or $\pi_B(\Delta) = (\delta_{m+1}, \ldots, \delta_{m+n}) \neq 0$. Without loss of generality, suppose that $\pi_A(\Delta) \neq 0$. Then the image $\pi_A(x + \Delta\mathbb{N})$ in $A$ is the set
\begin{align*}
    \pi_A(x + \Delta\mathbb{N}) = \{(x_1 + k\delta_1, \ldots, x_m + k\delta_m) : k \in \mathbb{N}\}.
\end{align*}
We recognize this as the arithmetic progression $\pi_A(x) + \pi_A(\Delta) \mathbb{N}$ with nonzero gap $\pi_A(\Delta)$. This progression is contained in $A$.

Now fix $\lambda$ in $(0,1)$, and consider a cube $Q \subset \mathbb{R}^n$ with unit volume. For each $i$, let $\pi_i : \mathbb{R}^n \to \mathbb{R}$ be the projection map onto the $i^\text{th}$ coordinate axis. Then the sets $\pi_1(Q), \ldots, \pi_n(Q)$ are intervals of unit length in $\mathbb{R}$. 

Apply Corollary \ref{cor:reduction} with $\lambda^{1/n}$, to obtain a subset $S$ of $\mathbb{R}$ that does not contain any arithmetic progression, and with $\vert S \cap \pi_i(Q) \vert \geq \lambda^{1/n}$ for each $i$.

Let $S^n$ be the $n$-fold Cartesian product $S^n = S \times \ldots \times S$. From our above observations, we know that $S^n$ does not contain any arithmetic progression. Also, by definition of product measures, we have that
\begin{align*}
    \vert S^n \cap Q\vert = \vert (S \cap \pi_1(Q)) \times \ldots \times (S \cap \pi_n(Q)) \vert = \prod_{i=1}^n \vert S \cap \pi_i(Q) \vert \geq \lambda,
\end{align*}
as required.

(Note that we use $\vert \cdot \vert$ to denote both the $1$-dimensional and $n$-dimensional Lebesgue measure.)
\end{proof}

\section{The range of $\lambda$ is optimal}

If a subset $S$ of $\mathbb{R}$ intersects every interval of unit length in a set of \textit{full} measure, then the complement of $S$ in $\mathbb{R}$ must have finite measure. It is known, however, that any subset of $\mathbb{R}$ whose complement has finite measure must contain an arithmetic progression. Thus, Corollary \ref{cor:reduction} fails when $\lambda = 1$.

The fact that sets whose complements have finite measure must contain arithmetic progressions follows Theorem \ref{th:chlebik}, below. This result is due to Chlebík, as reported by Svetic in 2000 \cite{Svetic}. A proof appears in a 2015 preprint by Chleb\'ik \cite{Chlebik}.

Chleb\'ik terms a subset $X \subseteq \mathbb{R}$ \textit{uniformly locally finite} if 
\begin{align*}
    \operatorname{sup}\{\#(X \cap [u,u+1]): u \in \mathbb{R}\} < \infty.
\end{align*}

\begin{externaltheorem}[Chleb\'ik, Theorem 14(a) from \cite{Chlebik}]\label{th:chlebik}
    Let $X \subseteq \mathbb{R}$. If $X$ is uniformly locally finite then there is a constant $\epsilon > 0$ such that whenever $G \subseteq \mathbb{R}$ is a Lebesgue measurable set with $|G| < \epsilon$, then $\mathbb{R} \setminus G$ contains plenty of translations of $X$; namely $\{a \in \mathbb{R} : (a + X) \subseteq (\mathbb{R} \setminus G)\}$ has infinite Lebesgue measure.
\end{externaltheorem}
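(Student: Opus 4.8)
The plan is to recast the assertion as a covering estimate on bounded windows and then to use uniform local finiteness to control the multiplicity of the cover. First I would note that, for a given $a$, the inclusion $a + X \subseteq \mathbb{R}\setminus G$ holds exactly when $a \notin G - x$ for every $x \in X$; thus the set in the statement is precisely $A := \mathbb{R}\setminus\bigcup_{x\in X}(G-x)$. Since $X$ is uniformly locally finite, each unit interval meets $X$ in finitely many points, so $X$ is countable and $A$ is measurable. Write $C := \sup_{u}\#(X\cap[u,u+1]) < \infty$ and fix once and for all a constant $\epsilon$ with $C\epsilon < 1$ (for concreteness $\epsilon := 1/(2C)$), which depends only on $X$.

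The heart of the argument is a single windowed estimate. For $T > 0$, since $A\cap[0,T] = [0,T]\setminus\bigcup_{x\in X}\bigl((G-x)\cap[0,T]\bigr)$, subadditivity and translation invariance of Lebesgue measure give
\begin{align*}
    \bigl|A\cap[0,T]\bigr| \;\ge\; T - \sum_{x\in X}\bigl|(G-x)\cap[0,T]\bigr| \;=\; T - \sum_{x\in X}\bigl|G\cap[x,\,x+T]\bigr|.
\end{align*}
By Tonelli's theorem the last sum equals $\int_{G}\#\bigl(X\cap[z-T,\,z]\bigr)\,dz$, and the elementary counting bound $\#(X\cap[a,\,a+L]) \le C\lceil L\rceil \le C(L+1)$, which follows from covering $[a,a+L]$ by $\lceil L\rceil$ unit intervals, shows the integrand is at most $C(T+1)$. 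Hence $\bigl|A\cap[0,T]\bigr| \ge T - C(T+1)|G|$. If $|G| < \epsilon$, the right-hand side is bounded below by $T(1-C\epsilon) - C\epsilon$, which tends to $+\infty$ as $T\to\infty$; since $|A| \ge \bigl|A\cap[0,T]\bigr|$ for every $T$, we conclude $|A| = \infty$.

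The step I expect to be the main (really, the only) conceptual obstacle is resisting the instinct that $\bigcup_{x\in X}(G-x)$, being a countably infinite union of translates of the small set $G$, could have infinite or even full measure and thereby swamp $A$. The point is that one never controls this union globally: on any bounded window $[0,T]$ the translates $G-x$ overlap with multiplicity at most $C(T+1)$, so they cover at most a $C(T+1)|G|$-portion of the window, leaving a fixed positive \emph{fraction} of it in $A$ once $|G|$ is small — and that fraction, measured in absolute terms, is unbounded in $T$. Everything else (measurability of $A$, the Tonelli interchange, and the counting lemma) is routine, so the proof should be short.
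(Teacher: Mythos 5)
Your proof is correct. Note, though, that the paper does not actually prove this statement: it is cited as an external result (Chleb\'ik, Theorem~14(a)), and the paper only proves the special case $X = \mathbb{N}$, as Proposition~4.2, by a different method. There, the authors tile $\mathbb{R}$ by intervals $I_k$ of length $2\xi > 2|\mathbb{R}\setminus S|$, pull each $S\cap I_k$ back to $I_0$ by the translation $\tau_{-2k\xi}$, intersect over all $k\in\mathbb{Z}$, and bound the measure of the intersection from below by $|I_0| - |\mathbb{R}\setminus S| > 0$ using subadditivity and translation invariance; this hinges on the regular spacing of $\mathbb{Z}$ and yields only \emph{positive} measure for the set of valid base points. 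Your argument replaces that ``cut-and-overlay'' step by an averaging one: identify the set of valid translation parameters as $A = \mathbb{R}\setminus\bigcup_{x\in X}(G-x)$, and on a growing window $[0,T]$ bound $\sum_{x\in X}|(G-x)\cap[0,T]|$ by $C(T+1)|G|$ via Tonelli and the multiplicity estimate furnished by uniform local finiteness. This is both more general (it handles any uniformly locally finite $X$, not only $\mathbb{N}$) and stronger (it recovers the full ``infinite measure'' conclusion of Chleb\'ik's theorem rather than merely positive measure), while remaining about as short. The measurability of $A$, the Tonelli interchange, and the counting bound $\#(X\cap[a,a+L]) \le C\lceil L\rceil$ are all routine and correctly handled.
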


This result implies Proposition \ref{prop:finitecomplement} when $X = \mathbb{N}$ and $G = [\epsilon/(2\vert \mathbb{R}\setminus S \vert)](\mathbb{R} \setminus S)$. For convenience, we include below an elementary proof of the relevant case of Chleb\'ik's result.

\begin{proposition}\label{prop:finitecomplement}
If the complement of a subset $S$ of $\mathbb{R}$ has finite measure, then $S$ contains a two-sided arithmetic progression $x + \Delta \mathbb{Z}$ for some $x$ in $S$ and nonzero $\Delta$ in $\mathbb{R}$.
\end{proposition}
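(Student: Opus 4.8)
The plan is to show that if $G = \mathbb{R} \setminus S$ has finite measure, then we can find a gap length $\Delta$ and a starting point $x$ so that the entire two-sided progression $x + \Delta\mathbb{Z}$ avoids $G$. The natural approach is a measure-theoretic averaging (pigeonhole) argument: fix a large gap $\Delta$, and for each residue-type shift consider how much of $G$ is ``seen'' by the lattice $\Delta\mathbb{Z}$ translated by a parameter $t$; if the total is small enough, some translate must miss $G$ entirely.

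More concretely, here is how I would carry it out. First, since $|G| < \infty$, pick $\Delta > 0$ large enough that $|G| < \Delta$ (any such $\Delta$ works; strict inequality will matter). Next, for $t \in [0,\Delta)$ consider the two-sided arithmetic progression $P_t = t + \Delta\mathbb{Z} = \{t + n\Delta : n \in \mathbb{Z}\}$. I want to show that for some $t$, $P_t \cap G = \emptyset$. The key computation is to integrate, over $t \in [0,\Delta)$, the number of points of $P_t$ that land in $G$. By the layer-cake / Fubini argument,
\begin{align*}
    \int_0^{\Delta} \#(P_t \cap G)\, dt = \int_0^\Delta \sum_{n \in \mathbb{Z}} \mathbf{1}_G(t + n\Delta)\, dt = \sum_{n \in \mathbb{Z}} \int_{n\Delta}^{(n+1)\Delta} \mathbf{1}_G(s)\, ds = |G| < \Delta.
\end{align*}
Hence the average value of the nonnegative integer-valued function $t \mapsto \#(P_t \cap G)$ over an interval of length $\Delta$ is strictly less than $1$, so the set of $t$ for which $\#(P_t \cap G) = 0$ has positive measure; in particular it is nonempty. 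Pick such a $t$, set $x = t$, and then $x + \Delta\mathbb{Z} \subseteq S$, which is the desired two-sided progression.

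One subtlety to address cleanly is measurability: $G$ is measurable (as the complement of $S$, which must be measurable for ``$|S \cap [m,m+1]|$'' to make sense, or one can simply assume $S$ measurable), so $\mathbf{1}_G$ is measurable and the Tonelli interchange of sum and integral is justified since all terms are nonnegative. Another small point is that I should make sure the conclusion gives a genuinely nonzero gap, which it does since $\Delta > 0$. I do not expect a real obstacle here — the argument is short and self-contained — but if I wanted to squeeze out more (e.g. that the set of valid base points has infinite measure, matching Chleb\'ik's stronger statement), I would note that for each integer $k$ the interval $[k\Delta, (k+1)\Delta)$ contributes a positive-measure set of good $t$, and these are disjoint, giving infinitely many; however, for Proposition~\ref{prop:finitecomplement} as stated, exhibiting a single progression suffices.
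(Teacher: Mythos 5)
Your proof is correct and follows essentially the same pigeonhole-over-a-period idea as the paper's: the paper bounds from below the measure of the set of good base points $I \subseteq (0,2\xi)$ using subadditivity and translation invariance, whereas you compute the exact average $\frac{1}{\Delta}\int_0^\Delta \#(P_t \cap G)\,dt = |G|/\Delta < 1$ by Tonelli and conclude by pigeonhole. The Tonelli identity is a marginally cleaner way to run the same computation (and shows any $\Delta > |G|$ works, versus the paper's $\Delta = 2\xi$ with $\xi > |G|$), but the underlying argument is the same.
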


\begin{proof} Let $S$ be a subset of $\mathbb{R}$ whose complement has finite measure. Then $\vert \mathbb{R} \setminus S \vert < \xi < \infty$ for some positive real number $\xi$. For each integer $k$, let $I_k$ denote the open interval $(2k\xi, 2(k+1)\xi)$. Let $\tau_{y}:\mathbb{R} \rightarrow \mathbb{R}$ be the translation map defined by $\tau_y(x) = x+y$, and consider the set
\begin{align}\label{eq:setI}
    I = \bigcap_{k \in \mathbb{Z}} \tau_{-2k\xi}(S \cap I_k).
\end{align}

To show that $S$ contains an infinite arithmetic progression, it suffices to show that $I$ is nonempty. Indeed, if $x \in I$, then we have $x + 2k\xi$ contained in $S \cap I_k$ for each $k \in \mathbb{Z}$, and hence the arithmetic progression $x + 2\xi\mathbb{Z}$ is contained in $S$.

We prove that $I$ is nonempty by showing that it has positive measure. We require the fact that $I \subset I_0$, as well as the subadditivity and translation invariance of Lebesgue measure. By the definition of $I$ in \eqref{eq:setI},
\begin{align*}
|I| &= \left|\bigcap_{k \in \mathbb{Z}} \tau_{-2k\xi}(S \cap I_k)\right|.
\end{align*}
Since $I \subset I_0$, we may write
\begin{align*}
|I| &=\left|I_0 \setminus \bigcup_{k \in Z} \left(I_0 \setminus \tau_{-2k\xi}(S \cap I_k)\right)\right|.
\end{align*}
Now, since $I_0 \setminus \tau_{-2k\xi}(S \cap I_k)$ is contained in $I_0$, and since $I_0$ has finite measure, we see that
\begin{align*}
|I| &=|I_0| - \left|\bigcup_{k \in Z} \left(I_0 \setminus \tau_{-2k\xi}(S \cap I_k)\right)\right|.
\end{align*}
From the subadditivity of measures, we obtain
\begin{align*}
|I| &\geq |I_0| - \sum_{k \in \mathbb{Z}} \left|I_0 \setminus \tau_{-2k\xi}(S \cap I_k)\right|.
\end{align*}
Translating the intervals $I_0 \setminus \tau_{-2k\xi}(S \cap I_k)$ forward by $2k\xi$, and appealing to the translation invariance of the Lebesgue measure, we see that
\begin{align*}
|I| &\geq |I_0| - \sum_{k \in \mathbb{Z}} \left|\tau_{2k\xi}(I_0) \setminus (S \cap I_k)\right|.
\end{align*}
Recognising $\tau_{2k\xi}(I_0)$ as the interval $I_k$, we may write
\begin{align*}
|I| &\geq|I_0| - \sum_{k \in \mathbb{Z}} \left|I_k \setminus (S \cap I_k)\right|.
\end{align*}
The identity $A\setminus (B\cap C) = (A\setminus B)\cup (A\setminus C)$ now gives
\begin{align*}
|I| &\geq |I_0| - \sum_{k \in \mathbb{Z}} \left|I_k \setminus S\right|,
\end{align*}
and by the countable additivity of measures, we have
\begin{align*}
|I| &\geq |I_0| - |\mathbb{R} \setminus S|.
\end{align*}
By definition, $I_0$ has Lebesgue measure $2\xi$. Moreover, the complement of $S$ in $\mathbb{R}$ has measure less than $\xi$, so
\begin{align*}
|I| &> \xi > 0
\end{align*}
as needed.
\end{proof}

Although Corollary 3.1 fails when $\lambda = 1$, it is reasonable to ask the following weaker question: does there exist an increasing sequence of positive real numbers $\{\lambda_m\} \to 1^{-}$ in $[0,1)$ and a subset $S$ of $\mathbb{R}$ satisfying $\vert S \cap [m,m+1] \vert \geq \lambda_m$ for each $m$, such that $S$ does not contain any arithmetic progression? 

The following theorem answers this question affirmatively.

\begin{externaltheorem}[Burgin--Goldberg--Keleti--MacMahon--Wang, Theorem 1.3 from \cite{newkeleti}]
    There exists a closed set $S \subset [0,\infty)$ satisfying $\lim_{m \to \infty} \vert S \cap [m,m+1] \vert = 1$ such that $S$ does not contain any infinite arithmetic progression.
\end{externaltheorem}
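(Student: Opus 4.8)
The plan is first to peel off the rational‑gap progressions using the intersection trick of the Proposition above. Its proof applies verbatim with the uniform measure bound replaced by the condition $|I\setminus S|\to 0$ as the unit interval $I$ recedes to $+\infty$: if $S\subseteq[0,\infty)$ is closed, $|S\cap[m,m+1]|\to1$, and $S$ contains no progression of irrational gap length, then for a fixed irrational $r>1$ the set $T=S\cap rS$ is closed, lies in $[0,\infty)$, avoids every progression (a rational‑gap progression inside $rS$ would rescale to an irrational‑gap progression inside $S$), and satisfies $|[m,m+1]\setminus T|\le |[m,m+1]\setminus S|+r\,|[m/r,(m+1)/r]\setminus S|\to0$, hence $|T\cap[m,m+1]|\to1$. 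So it suffices to build such an $S$.

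\textbf{Construction of $S$.} I would split $\mathbb{N}$ into consecutive blocks (``stages'') $\Sigma_1,\Sigma_2,\dots$, where $\Sigma_n$ consists of $\ell_n$ consecutive integers; the only thing I need of $\ell_n$ is that it eventually grows at least polynomially, say $\ell_n\ge n^2$. From the unit interval $[m,m+1)$ with $m\in\Sigma_n$, delete the single open interval $(m,m+\delta_n)$, where $\delta_n\downarrow0$ is a fixed sequence of order $n^{-1/2}$ (capped below $1$ for small $n$). Put $S=[0,\infty)\setminus\bigcup_m(m,m+\delta_{n(m)})$, with $n(m)$ the stage of $m$. Then $S$ is closed, being $[0,\infty)$ minus an open set, and $|S\cap[m,m+1]|=1-\delta_{n(m)}\to1$. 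Everything then reduces to showing $S$ contains no progression of irrational gap length.

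\textbf{Ruling out irrational‑gap progressions.} Suppose $x+\Delta\mathbb{N}\subseteq S$ with $\Delta$ irrational, written increasingly as $\{x_j\}$; the fractional parts of a run of consecutive terms form an orbit segment of the rotation by $\Delta$ on $\mathbb{R}/\mathbb{Z}$. I would exploit a density estimate that is uniform in $\Delta$ along a good subsequence of scales: for each best‑approximation denominator $q$ of $\Delta$ one has $\theta:=\|q\Delta\|<1/q$, and writing $n_q=q\lceil1/\theta\rceil$ the points $\{0,q\Delta,2q\Delta,\dots\}\bmod1$ show that $\{i\Delta\bmod1:0\le i<n_q\}$ has all gaps $\le\theta$; since $n_q\ge q^2$ this gives maximal gap $\le\theta\le 2q/n_q\le 2/\sqrt{n_q}$ (alternatively, invoke the three‑distance theorem directly). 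With $\delta_n$ of order $n^{-1/2}$ chosen a little larger than $2/\sqrt n$, the stage $\Sigma_{n_q}$ is, for $q$ large, long enough that $x+\Delta\mathbb{N}$ has at least $n_q$ terms inside it; their fractional parts form an orbit segment of length $\ge n_q$, hence of maximal gap $<\delta_{n_q}$, so one of these terms has fractional part in $(0,\delta_{n_q})$ and therefore lies in a deleted interval — contradicting $x+\Delta\mathbb{N}\subseteq S$. (Weyl equidistribution, as used in Section~\ref{sec:proof}, would give the qualitative version of this but not the needed control at a fixed small scale.)

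\textbf{Expected main difficulty.} The crux is uniformity over all irrational gaps, Liouville ones included. A deleted interval of width $\delta$ can be ``threaded'' by the fractional parts of $x+\Delta\mathbb{N}$ over an arbitrarily long stretch of unit intervals when $\Delta$ is extremely well approximated by a rational of denominator near $1/\delta$; consequently no single growth rate for the stage lengths can force a hit at \emph{every} scale. The way around this is to insist on a hit only at the scales $n_q$ dictated by the continued fraction of $\Delta$ — unknown in advance but present in infinite supply for every irrational $\Delta$ — where the Dirichlet/three‑distance estimate supplies the required density automatically; polynomially growing stages together with a hole width of order $n^{-1/2}$ then close the argument, and both ingredients are compatible with $|S\cap[m,m+1]|\to1$.
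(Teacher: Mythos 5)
The paper cites this result from \cite{newkeleti} as an external theorem and does not reproduce a proof, so there is no internal argument to compare yours against; I can only assess your proposal on its own terms. With that caveat, your sketch is sound and rests on the right quantitative refinement of the equidistribution idea that drives Lemma~2.2 of this paper. The reduction $T = S\cap rS$ carries over from the referee's Proposition~3.2 exactly as you say: $T$ stays closed, stays in $[0,\infty)$, rational-gap progressions in $T$ rescale to irrational-gap progressions in $S$, and $|[m,m+1]\setminus T|\le|[m,m+1]\setminus S|+r\,|[m/r,(m+1)/r]\setminus S|\to 0$ because $[m/r,(m+1)/r]$ recedes to $+\infty$ with $m$. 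For the core estimate: taking $q$ a best-approximation denominator of $\Delta$ so that $\theta=\|q\Delta\|<1/q$, the sub-orbit $\{jq\Delta\bmod 1: 0\le j<\lceil 1/\theta\rceil\}$ indeed has all circular gaps $\le\theta$, hence so does the full segment $\{i\Delta\bmod 1:0\le i<n_q\}$ with $n_q=q\lceil 1/\theta\rceil$; your bounds $n_q>q^2$ and $\theta<2q/n_q\le 2/\sqrt{n_q}$ check out, and a circle covered with maximal gap $<\delta$ must meet every open arc of length $\delta$, so some term of a run of $\ge n_q$ consecutive points has fractional part in $(0,\delta_{n_q})$. Choosing $\ell_n\ge n^2$ guarantees at least $n_q$ terms of the progression have integer part in $\Sigma_{n_q}$ once $n_q>\Delta$, which holds for $q$ large. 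Two small things worth spelling out if you write this up: (i) you need the strict inequality $\theta<\delta_{n_q}$, so take $\delta_n$ to be, say, $\min\{1/2,\,3/\sqrt{n}\}$ rather than merely ``of order $n^{-1/2}$''; and (ii) you should say explicitly that Dirichlet's theorem supplies infinitely many such $q$, so that $n_q$ can be pushed past any threshold (both $n_q>\Delta$ and $n_q$ large enough that the progression actually reaches the stage $\Sigma_{n_q}$). Your closing remark about why Weyl alone is insufficient is exactly the right diagnosis: the holes here shrink, so the fixed-interval density control of Lemma~2.2 must be upgraded to a scale-dependent gap estimate, and the three-distance/Dirichlet machinery is the natural tool for that.
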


\section*{Acknowledgements}

The authors thank J. Repka for proposing the problem addressed in this paper and M. Pramanik for many helpful comments regarding the exposition. We also thank the anonymous referee for their valuable comments and for suggesting Proposition 3.2.

\bibliographystyle{amsalpha}
\begin{bibdiv}
\begin{biblist}

\bib{Behrend}{article}{
title={On sets of integers which contain no three terms in arithmetical progression},
author={F.A. Behrend},
journal={Proc. Nat. Acad. Sci.},
volume={32},
number={12},
date={1946},
pages={331--332},
doi={10.1073/pnas.32.12.331}
}

\bib{newkeleti}{misc}{
title = {Large sets avoiding infinite arithmetic/geometric progressions},
author = {Alex Burgin},
author = {Samuel Goldberg},
author = {Tamás Keleti},
author = {Connor MacMahon},
author = {Xianzhi Wang},
year = {2022},
note = {https://arxiv.org/abs/2210.09284}
}

\bib{Chlebik}{misc}{
title = {On the Erd\H{o}s similarity problem},
author = {Chleb\'ik, Miroslav},
year = {2015},
note = {https://arxiv.org/abs/1512.05607}
}
    
\bib{DensonPramanikZahl}{article}{
title={Large sets avoiding rough patterns},
author={Denson, Jacob},
author={Pramanik, Malabika},
author={Zahl, Joshua},
note={In \textit{Harmonic analysis and applications}, volume 168 of \textit{Springer Optim. Appl.}, 59--75. Springer, Cham, [2021], DOI 10.1007/978-3-030-61887-2\_4}
}

\bib{FraserPramanik}{article}{
title={Large sets avoiding patterns},
author={Fraser, Robert},
author={Pramanik, Malabika},
journal={Anal. PDE},
volume={11},
number={5},
date={2018},
pages={1083--1111},
doi={10.2140/apde.2018.11.1083}
}

\bib{Keleti}{article}{
title={Construction of one-dimensional subsets of the reals not containing similar copies of given patterns},
author={Keleti, Tamás},
journal={Anal. PDE},
volume={1},
number={1},
date={2008},
pages={29--33},
doi={10.2140/apde.2008.1.29}
}

\bib{KandP}{misc}{
title = {Large sets containing no copies of a given infinite sequence},
author = {Mihail N. Kolountzakis},
author = {Effie Papageorgiou},
year = {2022},
note = {https://arxiv.org/abs/2208.02637}
}

\bib{Maga}{article}{
title={Full dimensional sets without given patterns},
author={Maga, Péter},
journal={Real Anal. Exchange},
volume={36},
number={1},
date={2010/2011},
pages={79--90},
doi={10.14321/realanalexch.36.1.0079}
}

\bib{Mathe}{article}{
title={Sets of large dimension not containing polynomial configurations},
author={Máthé, András},
journal={Adv. Math.},
volume={316},
date={2017},
pages={691--709},
doi={10.1016/j.aim.2017.01.002}
}

\bib{Joe}{article}{
title={Personal Communication},
author={Repka, Joe},
date={2022}
}

\bib{Pablo}{article}{
title={Salem sets with no arithmetic progressions},
author={Shmerkin, Pablo},
journal={Int. Math. Res. Not. IMRN},
volume={7},
date={2017},
pages={1929--1941},
doi={10.1093/imrn/rnw097}
}

\bib{Svetic}{article}{
title={The Erd\H{o}s similarity problem: A survey},
author={R.E. Svetic},
journal={Real Anal. Exchange},
volume={25},
number={1},
date={1999/2000},
pages={181--184},
doi={10.2307/44153069}
}

\bib{Szemeredi}{article}{
title={On sets of integers containing $k$ elements in arithmetic progression},
author={E. Szemerédi},
journal={Acta Arith.},
volume={27},
date={1975},
pages={199--245},
doi={10.4064/aa-27-1-199-245}
}

\bib{Weyl}{article}{
title={Über die Gleichverteilung von Zahlen mod. Eins},
author={Weyl, Hermann},
journal={Math. Ann.},
volume={77},
date={1916},
pages={313--352},
doi={10.1007/BF01475864}
}

\bib{Alexia}{article}{
title={Large sets avoiding linear patterns},
author={Yavicoli, Alexia},
journal={Proc. Amer. Math. Soc.},
volume={149},
date={2021},
pages={4057--4066 },
doi={10.1090/proc/13959}
}

\bib{Wag}{article}{
title={Sequences not containing an infinite arithmetic progression},
author={Samuel S. Wagstaff Jr.},
journal={Proc. Amer. Math. Soc.},
volume={36},
number={2},
date={1972},
pages={395--397},
doi={10.2307/2039167}
}

\end{biblist}
\end{bibdiv}

\end{document}